\newcommand{\ignore}[1]{}
\renewcommand{\Re}{\operatorname{Re}}
\renewcommand{\Im}{\operatorname{Im}}
\newcommand{\sabs}[1]{\lvert {#1} \rvert}
\newcommand{\snorm}[1]{\lVert {#1} \rVert}
\newcommand{\C}{{\mathbb{C}}}
\newcommand{\R}{{\mathbb{R}}}
\newcommand{\D}{{\mathbb{D}}}
\newcommand{\bT}{{\mathbb{T}}}
\newcommand{\sA}{{\mathcal{A}}}
\newcommand{\sD}{{\mathcal{D}}}
\newcommand{\sO}{{\mathcal{O}}}
\newcommand{\sU}{{\mathcal{U}}}
\newtheorem{thm}{Theorem}[section]
\newtheorem{cor}[thm]{Corollary}
\newtheorem{lemma}[thm]{Lemma}
\newcommand{\innerthmname}{}
\theoremstyle{definition}
\newtheorem{defn}[thm]{Definition}
\newtheorem{example}[thm]{Example}
\theoremstyle{remark}
\author{Ji\v{r}\'{\i} Lebl}
\thanks{The first author was in part supported by Simons Foundation collaboration grant 710294.}
\address{Department of Mathematics, Oklahoma State University,
Stillwater, OK 74078, USA}
\email{lebl@okstate.edu}
\author{Alan Noell}
\address{Department of Mathematics, Oklahoma State University,
Stillwater, OK 74078, USA}
\email{alan.noell@okstate.edu}
\author{Sivaguru Ravisankar}
\thanks{The last author was in part supported by Mathematical Research Impact Centric Support (MATRICS) grant MTR/2022/000865 from the Science and Engineering Research Board (SERB), Government of India.}
\address{Tata Institute of Fundamental Research, Centre for Applicable Mathematics, Bengaluru 560065, India}
\email{sivaguru@tifrbng.res.in}
\date{\today}
\title{Cartan uniqueness theorem on nonopen sets}
\keywords{Cartan uniqueness theorem, real-analytic subvariety, CR function, disc hull}
\subjclass[2020]{32H02 (Primary),  32V40 32B20 (Secondary)}
\begin{document}

\begin{abstract}
Cartan's uniqueness theorem does not hold in general for CR mappings,
but it does hold under certain conditions guaranteeing extendibility of
CR functions to a fixed neighborhood.  These conditions can be defined naturally for
a wide class of sets such as local real-analytic subvarieties or
subanalytic sets, not just submanifolds.  Suppose that $V$ is a locally connected and locally
closed subset of $\C^n$ such that the hull constructed by contracting analytic discs
close to arbitrarily small neighborhoods of a point always contains the
point in the interior. Then restrictions of holomorphic functions uniquely extend
to a fixed neighborhood of the point.  Using this extension, we obtain
a version of Cartan's uniqueness theorem for such sets.
When $V$ is a real-analytic subvariety, we can generalize the concept of
infinitesimal CR automorphism and also prove an analogue of the theorem.
As an application of these two results we show that, for circular subvarieties
satisfying the condition, the only
automorphisms, CR or infinitesimal, are linear.
\end{abstract}

\maketitle

\textbf{Note:} The definition of the contracting disc hull has been changed
with respect to the published version.  An extra hypothesis is necessary.
The main issue is that Lemma 2.1 needs to be replaced.
See section~\ref{section:exampleerratum} for more details of what changed.
An unrelated minor issue was that Lemma 3.1 is missing a hypothesis in the
published version.



\section{Introduction} \label{section:intro}

It is a classical theorem of Cartan that, given a bounded domain $U \subset
\C^n$, a point $p \in U$, and a holomorphic mapping $f \colon U \to U$ with
$f(p)=p$ and $Df(p) = I$, then $f$ is the identity map.
A similar theorem
does not in general hold if $U$ is replaced by a real submanifold or
subvariety and $f$ is replaced by a CR mapping (see Example~\ref{example:failure}).  Since the classical
theorem is useful in computing the automorphisms of $U$, we wished to
investigate when a CR analogue of this theorem holds. We also
wanted to investigate local infinitesimal CR automorphisms, so we wished to
obtain a local CR analogue.

The types of sets we consider are fairly general,
such as real-analytic subvarieties, semianalytic sets, or subanalytic sets.
For our main result, it in fact suffices that the set is locally connected and
locally closed.
Let $V \subset \C^n$ be such a set.
As $V$ is not a CR submanifold, we consider $\sO(V)$,
the class of functions or mappings
that are restrictions to $V$ of functions or mappings holomorphic
in some neighborhood of $V$.  When $V$ is a real-analytic CR submanifold, this class
is the same as the set of real-analytic CR functions or mappings.  A key point in
this study is that the neighborhood to which such a function extends need
not be a fixed neighborhood of $V$ unless some extra geometric
condition is satisfied by $V$.

As we are interested in extension of CR functions into some fixed
neighborhood, the geometric condition we have in mind is a kind of hull.
Given a set $K \subset \C^n$ and $p \in K$, let $\widehat{K}_{CD,p}$ be
the \emph{contracting disc hull at $p$}, defined as follows.
First, by an analytic disc we mean a continuous map $\varphi \colon
\overline{\D} \to \C^n$ such that $\varphi$ is holomorphic in $\D$. (Here $\D$ is the unit disc in $\C$.)
By a continuous family of analytic discs, we mean a continuous map that depends
on an additional real variable.  We are more interested in the image of $\varphi$
than in the map itself, so we may refer to the image as the disc
rather than the map.

We define
$\widehat{K}_{CD,p}$ to be the set of points $z \in \C^n$ such that for
every $\epsilon > 0$, there is
a continuous family of analytic discs
$\varphi_{t} \colon \overline{\D} \to \C^n$ ($t \in [0, 1]$)
such that the boundary $\varphi_t(\partial \D)$ is within an
$\epsilon$-neighborhood of $K$ for all $t$,
$\varphi_1(0)=z$, $\varphi_0 \equiv p$,
and $\snorm{p-\varphi_t(0)}$ is
a strictly increasing function of $t$.

If we leave out the contracting condition, just assuming a single disc
through a point, we obtain the \emph{disc hull}, which we denote as $\widehat{K}_{D}$.
Such a hull is used in
constructing the polynomial hull of a set (see, e.g., \cites{Poletsky,Porten}).
However, if we leave out the contracting hypothesis, then functions in
$\sO(K)$ may not necessarily extend to such $\widehat{K}_{D}$.
A simple example is the unit
circle $\bT$ in $\C$, where $\widehat{\bT}_{D} = \overline{\D}$ but
$\widehat{\bT}_{CD,p} = \bT$.  Versions of the hull $\widehat{\bT}_{CD,p}$
can be used,
therefore, to construct the envelope of holomorphy (see also \cite{Joricke:09}).

\begin{defn}
A set $V \subset \C^n$ satisfies the \emph{contracting disc hull condition
at} $p\in V$ if, for every compact neighborhood $K \subset V$ of $p$
(neighborhood in the subspace topology), the set $\widehat{K}_{CD,p}$ contains
$p$ in its interior (in the topology of $\C^n$).
\end{defn}

Using the Kontinuit\"atssatz (see, e.g., \cite{Lebl:scv}*{Theorem 2.1.7 in
4th edition}), one can prove that
every function in $\sO(V)$ continues
to a holomorphic function on
some fixed neighborhood of any point of $K$ that is in the interior of
$\widehat{K}_{CD,p}$;
 by considering smaller and smaller $K$, we can show that the extension
is unique.  See Lemmas~\ref{lemma:extend} and \ref{lemma:unique}.
With such an extension, we can prove
a version of Cartan's uniqueness theorem (see Theorem~\ref{thm:funcs}):
If a connected and bounded real-analytic variety $V$ (or more generally a connected, bounded, locally connected, and locally closed set) satisfies the contracting disc hull condition at $p$, and if the
tangent cone of $V$  at $p$  is generic in the sense that it is not contained in a complex
hyperplane, then the identity map is the only self-mapping $f$ of $V$ in
$\sO(V)$ such that $f(p)=p$ and $Df(p)$ is the identity on the tangent cone.

An example of a set $V$ that satisfies this condition is the hypersurface
$\Im w = \sabs{z_1}^2 - \sabs{z_2}^2$ at $0$ (if we take the intersection with a ball containing $0$). This is a hypersurface
whose Levi-form has eigenvalues of both signs.  The condition gets
more complicated, however, when the dimension of $V$ is (much) smaller than a
hypersurface, and in some sense understanding a singular set of possibly
high codimension is our primary motivation.

The motivation for this work was to study the isotropy group at a point in
$V$.  The theorem can be thought of as a finite determination result for a certain
subclass of automorphisms.  The finite determination problem for an
automorphism is a topic with a very long history.  Chern and
Moser~\cite{ChernMoser:74} proved
that for a real-analytic Levi-nondegenerate hypersurface, the automorphisms are
determined by the 2-jet.
The finite determination question in various
dimensions and regularities has attracted much attention over the years
(see, e.g.,~\cites{Beloslapka:88,BER:98,ELZ:03,LamelMir:07,FLF:19,Tumanov:20} and the
references within).  See also the surveys by Zaitsev~\cite{Zaitsev:02} and
Baouendi--Ebenfelt--Rothschild~\cite{BER:00}.
A Cartan-like theorem is analogous to 1-jet determination.
However,
even automorphisms of the sphere are not determined by the 1-jet, so
further hypotheses on the set are necessary.
Cartan's theorem has also been generalized to almost complex manifolds by Lee~\cite{Lee:06}.

In addition to the local automorphisms, we also want to study the so-called
infinitesimal CR automorphisms.
For open sets the generalization to infinitesimal automorphisms was done by
Kaup and Upmeier~\cite{KaupUpmeier}.
We can define such automorphisms for
real-analytic subvarieties.  An infinitesimal CR automorphism is a vector
field $X$ such that $X = \Re Z$ for some holomorphic vector field $Z$ and
such that $X_q \in T_q V$ for all regular points $q$ of $V$.  Such a vector
field can be integrated into a family of biholomorphisms.

To obtain a version of Cartan's theorem, we need to generalize
two conditions on the mapping in the theorem: that the derivative is the identity at
a point, and that the mapping takes a bounded domain to itself (so that it
can be iterated).
First, the derivative being the identity simply says that the vector field is of
order 2 at $p$.
Second, requiring that the vector field can be iterated indefinitely can be replaced by
requiring that the flow of the vector field exist for all time.
With these conditions, and assuming as before that $V$
satisfies the contracting disc hull condition at $p$ and the tangent cone
at $p$ is generic,
we conclude that the infinitesimal automorphism $X$ is simply the zero vector field.
See Theorem~\ref{thm:vf}.


\section{The theorem for mappings}

Our motivation is to prove a local version of Cartan's uniqueness
on real-analytic subvarieties, semianalytic sets, or even
subanalytic sets; however, the main feature of the type of sets
that we will require is that they are locally connected and locally closed.
Some of the results below hold in even more generality.

\begin{lemma} \label{lemma:extend}
Let $K \subset \C^n$ be a
compact and connected subset and $p \in K$.
Suppose that 
$\widehat{K}_{CD,p}$ has $p \in K$ in its interior,
and let $B \subset \widehat{K}_{CD,p}$ be a ball centered at $p$ such that $B \cap K$ is connected.
Let $f\in\sO(K)$.
Then there exists a holomorphic function $F \colon B \to \C$
such that $f$ and $F$ agree on $B \cap K$.
\end{lemma}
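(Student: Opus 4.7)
The plan is to bootstrap from an initial extension of $f$ near $K$ to a holomorphic extension on all of $\Omega$ by propagating along the families of analytic discs supplied by the contracting disc hull condition, and then to patch the local extensions into a single-valued function using connectedness of $\Omega \cap K$ and simple connectedness of $\Omega$.

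First, since $f \in \sO(K)$ and $K$ is compact, $f$ extends to a holomorphic function $\tilde f$ on some open $U \supset K$, and we may assume $U$ contains the $\epsilon_0$-neighborhood of $K$ for some $\epsilon_0 > 0$. Next, fix any $z \in \Omega$. Because $z \in \widehat{K}_{CD}$, applying the hull condition with $\epsilon = \epsilon_0$ yields a continuous family of analytic discs $\varphi_t \colon \overline{\D} \to \C^n$, $t \in [0,1]$, with $\varphi_0(\overline{\D}) \subset U$, $\varphi_t(\partial \D) \subset U$ for all $t$, and $\varphi_1(0) = z$. The Kontinuit\"atssatz then delivers a holomorphic extension of $\tilde f$ to a neighborhood of $\varphi_1(\overline{\D})$, and in particular a holomorphic germ $g_z$ at $z$ extending $\tilde f$. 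Concretely, setting
\[
H_t(\zeta) \;=\; \frac{1}{2\pi i}\int_{\partial \D}\frac{\tilde f(\varphi_t(w))}{w - \zeta}\,dw, \qquad \zeta \in \D,
\]
gives holomorphic functions on $\D$ depending continuously on $t$, with $H_0 = \tilde f \circ \varphi_0$ by Cauchy's formula; a standard connectedness argument on $[0,1]$ shows that $H_t = \tilde f \circ \varphi_t$ wherever the right-hand side is defined, and by the joint parametric dependence of the discs the extension spreads to a full open neighborhood of $z$ in $\C^n$.

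With local extensions $g_z$ in hand at every $z \in \Omega$, the final step is gluing. For $z \in \Omega \cap K$ the germ $g_z$ agrees with $\tilde f = f$ near $z$ by uniqueness of analytic continuation. For a general $z \in \Omega$, choose a path in $\Omega$ from $p$ to $z$; the local germs along the path can be continued consecutively, each matching its neighbor on the overlap by uniqueness of analytic continuation. Simple connectedness of $\Omega$ and the monodromy theorem imply the resulting germ at $z$ is independent of the path, and connectedness of $\Omega \cap K$ ensures the globally defined function agrees with $f$ everywhere on $\Omega \cap K$. Calling the result $F$ completes the argument.

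The main obstacle is the Kontinuit\"atssatz step, namely converting the boundary information on the family $\varphi_t$ into a genuine holomorphic extension on an open neighborhood of $z$ in $\C^n$ rather than merely a value at $z$; this is where one must exploit continuity of the family in both the boundary and the disc parameter. Once that step is established, the monodromy bookkeeping using simple connectedness of $\Omega$ and connectedness of $\Omega \cap K$ is standard.
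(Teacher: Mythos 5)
Your overall strategy is the same as the paper's in outline: extend $f$ to a neighborhood $U$ of the compact set $K$, use the contracting disc families together with the Kontinuit\"atssatz to propagate the extension to each point of $\Omega$, and then invoke simple connectedness of $\Omega$ and connectedness of $\Omega\cap K$ to obtain a single-valued $F$ agreeing with $f$. The genuine gap is in the gluing step, which you dismiss as ``standard bookkeeping'' but which is actually the crux. For each $z\in\Omega$ you produce a germ $g_z$ by continuing $\tilde f$ along the disc family through $z$; for two nearby points $z,z'$ these germs arise from two unrelated chains of continuation, and ``uniqueness of analytic continuation'' does not force them to agree on the overlap of their domains: two continuations of the same function along different chains can land on different branches, which is exactly the multivaluedness the lemma must rule out. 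Relatedly, the monodromy theorem requires that the germ of $\tilde f$ at $p$ admit analytic continuation along \emph{every} path inside $\Omega$; what you have established is only the existence of a germ at each point of $\Omega$, obtained by continuation along a chain that follows the disc family and in general leaves $\Omega$ (the disc boundaries hug $K$, which need not lie in $\Omega$). So neither the hypothesis of the monodromy theorem nor the mutual coherence of the germs $g_z$ has been verified; the same issue affects your claim that $g_z$ agrees with $\tilde f$ for $z\in\Omega\cap K$.

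The paper resolves precisely this point by passing to the (possibly multisheeted) envelope of holomorphy $W'$ of the neighborhood $W$ of $K$: all continuations of all functions holomorphic on $W$ are organized coherently on the spread domain $W'$, the Kontinuit\"atssatz shows that every point of $\Omega\subset\widehat{K}_{CD}$ lies under $W'$, and simple connectedness of $\Omega$ is then used to select a single-sheeted copy $\Omega'$ of $\Omega$ in $W'$ containing $p$; restricting the canonical extension of $f$ to $W'$ down to $\Omega'$ yields $F$. If you wish to avoid the envelope of holomorphy, you must instead prove directly that the germ obtained at $z$ is independent of the chosen disc family and varies consistently with $z$ --- which is the content your write-up asserts rather than establishes. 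Your final step (propagating the identity $F=f$ from a neighborhood of $p$ to all of the connected set $\Omega\cap K$) is fine and matches the paper.
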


\begin{proof}
We assume $f$ is defined and holomorphic in some neighborhood $U$ of $K$.
Let $B = B_{\delta}(p)$ be as in the statement.
Clearly $f$ extends to some $B_{\delta'}(p)$ for $\delta' > 0$
as some ball lies completely in $U$.
Let $\delta'$ be the largest $\delta' \leq \delta$ such that $f$ has a
unique extension to $B_{\delta'}(p)$.  Suppose for a contradiction that $\delta' < \delta$.
Take $q \in \partial B_{\delta'}(p)$.
By hypothesis, there exists a path of
increasing distance from $p$ to $q$ given by a contracting family of
analytic discs whose boundaries stay inside $U$.
This path, except for the endpoint $q$, thus lies entirely in $B_{\delta'}(p)$.
By the Kontinuit\"atssatz, $f$ analytically continues along this path.
Thus there exists a small ball $\tilde{B}$ centered at $q$ such that
$f$ now uniquely extends to $B_{\delta'}(p) \cup \tilde{B}$.
As this construction can be done at every point in
$\partial B_{\delta'}(p)$, which is 
compact, we find that $f$ extends uniquely to some slightly larger $B_{\delta''}(p)$, providing a contradiction.
\end{proof}

\begin{lemma} \label{lemma:unique}
Let $V \subset \C^n$ be
a locally connected and locally closed set,
and suppose that $V$ satisfies the
contracting disc hull condition at $p \in V$.
Then there is a fixed neighborhood $\Omega \subset \C^n$ of $p$
such that for every function $f \in \sO(V)$ there is a unique
holomorphic $F \colon \Omega \to \C$ such that
$f$ and $F$ agree on $\Omega \cap V$.
\end{lemma}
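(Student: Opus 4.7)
My plan is to choose a fixed neighborhood $\Omega$ of $p$ so that existence follows directly from Lemma~\ref{lemma:extend}, and to establish uniqueness by exploiting the contracting disc hull condition to show that $V$ cannot be locally contained in any proper complex subvariety through $p$ --- which makes $\Omega \cap V$ a uniqueness set for holomorphic functions on $\Omega$.

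For the setup, I would use local closedness and local connectedness of $V$ at $p$ to pick a compact connected neighborhood $K$ of $p$ in $V$ (as the closure in $V$ of a small connected open neighborhood). Since $p \in \interior \widehat{K}_{CD}$, I then choose $\Omega$ a simply connected open neighborhood of $p$ satisfying the hypotheses of Lemma~\ref{lemma:extend}: $\Omega \subset \widehat{K}_{CD}$, $\Omega \cap K$ connected, and $\Omega$ small enough that $\Omega \cap V = \Omega \cap K$ (possible since $K$ is a neighborhood of $p$ in $V$). For any $f \in \sO(V)$, applying Lemma~\ref{lemma:extend} to $f|_K \in \sO(K)$ produces a holomorphic $F \colon \Omega \to \C$ with $F = f$ on $\Omega \cap V$.

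For uniqueness, suppose $F_1, F_2$ are two such extensions and set $h = F_1 - F_2$, holomorphic on $\Omega$ and vanishing on $\Omega \cap V$. If $h \not\equiv 0$ on the connected set $\Omega$, then $Y = \{h = 0\}$ is a proper complex subvariety of $\Omega$ containing $p$, and $V$ is locally contained in $Y$ near $p$. After shrinking $K$ if necessary so that $K \subset \Omega \cap V \subset Y$ (the contracting disc hull condition still giving $p \in \interior \widehat{K}_{CD}$), I would argue that every $z \in \widehat{K}_{CD}$ satisfies $h(z) = 0$, contradicting that $\widehat{K}_{CD}$ has interior. Fix such a $z$ and, for each $\epsilon > 0$, a family $\varphi_t$ from the definition of $\widehat{K}_{CD}$ with $\varphi_1(0) = z$. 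Componentwise maximum modulus applied to the coordinates of $\varphi_t - p$ confines each image $\varphi_t(\overline{\D})$ to a polydisc about $p$ of size $O(\mathrm{diam}(K) + \epsilon)$, contained in $\Omega$ for $K$ and $\epsilon$ small; on the $\epsilon$-neighborhood of $K \subset Y$, the Lipschitz property of $h$ gives $|h| \leq L\epsilon$. Applying the maximum modulus principle to $h \circ \varphi_t$ then yields $|h(z)| = |h(\varphi_1(0))| \leq L\epsilon$, and letting $\epsilon \to 0$ gives $h(z) = 0$.

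The main obstacle is controlling where the discs $\varphi_t(\overline{\D})$ lie, since the definition of $\widehat{K}_{CD}$ only constrains the boundaries to be near $K$. The resolution is to apply max modulus componentwise to the holomorphic maps $\varphi_t \colon \overline{\D} \to \C^n$ themselves: each image is trapped in a polydisc about $p$ whose radius is comparable to $\mathrm{diam}(K) + \epsilon$, so taking $K$ small enough at the outset ensures all discs stay inside $\Omega$, where $h$ is defined and the Lipschitz bound applies.
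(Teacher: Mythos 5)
Your proof follows essentially the same route as the paper's: existence by feeding a compact connected neighborhood $K$ of $p$ into Lemma~\ref{lemma:extend}, and uniqueness by shrinking to a small compact piece of $V$ on which the difference $h$ of two extensions vanishes, confining the discs from the hull condition by coordinatewise maximum modulus, and estimating $\sabs{h\circ\varphi_1}$ on $\partial\D$ by continuity of $h$ near that piece to conclude $h=0$ on a neighborhood of $p$ and hence on the connected set $\Omega$. The uniqueness half is sound (the detour through the zero variety $Y=\{h=0\}$ is unnecessary --- the maximum-modulus estimate is the entire argument, and only the single disc $\varphi_1$ is used, just as in the paper).

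The one step you should not take for granted is the choice of $\Omega$ in the existence half: you ask for an $\Omega$ that is simultaneously simply connected and has $\Omega\cap K$ connected. Local connectedness of $V$ at $p$ produces a connected, relatively open $W=K\cap O\ni p$ and hence a connected open $\Omega\subset\C^n$ with $\Omega\cap K=W$ (take the component of $O$ intersected with a small ball that contains $p$), but nothing forces that $\Omega$ to be simply connected, so Lemma~\ref{lemma:extend} cannot be applied to it as a black box. The paper decouples the two requirements: it first runs the \emph{proof} of Lemma~\ref{lemma:extend} on a simply connected $\widetilde\Omega\subset\widehat{K}_{CD}$ (e.g.\ a ball), obtaining an extension $F$ that agrees with $f$ on the component of $K\cap\widetilde\Omega$ through $p$, and only afterwards shrinks to a connected (not necessarily simply connected) $\Omega\subset\widetilde\Omega$ with $K\cap\Omega$ connected, restricting $F$ there. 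With that adjustment your argument matches the paper's.
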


\begin{proof}
Let $B \subset \C^n$ be a bounded neighborhood
of $p$ small enough such that
$K = V \cap \overline{B}$ is compact and connected.
We use the proof of Lemma~\ref{lemma:extend}
to find a neighborhood $\Omega$
of $p$ to which every function in $\sO(V)$ extends.
Hence, we get an extension of $f$ to $\Omega$
that agrees with $f$ on the
component of $K \cap \Omega$ through $p$.
We assume $\Omega \subset B$.
As $V$ (and hence $K$) is locally connected at $p$
we can also ensure that $\Omega$ is connected
and such that $K \cap \Omega$ is connected
and $\Omega \subset \widetilde{\Omega}$.
So for every $f \in \sO(V)$ there
is an $F$ holomorphic on $\Omega$
that agrees with $f$ on $K \cap \Omega = V \cap \Omega$.

Suppose $F$ and $G$ are two extensions of $f$ to $\Omega$.
Let $\overline{B_2} \subset \Omega$ be a closed ball
centered at $p$.
By the contracting disc hull condition, there is some
open neighborhood $\Omega'$ of the origin such that
every point $q \in \Omega'$ lies in some analytic disc
whose boundary can be made
arbitrarily close to $K_2 = V \cap \overline{B_2}$.
Suppose $B_3 \subset \Omega$ is a ball such that
$\overline{B_2} \subset B_3$. If the boundary of a disc is
close enough to $K_2$, that boundary lies in $B_3$,
and hence the entire disc
lies in $B_3 \subset \Omega$ (as $B_3$ is pseudoconvex).
As $F-G$ is zero on $K_2$, we find that it must
also be zero at $q$.
Thus $F=G$
on $\Omega'$ and hence on $\Omega$.
\end{proof}

We also need a version of the tangent space for a singular variety $V$.  One
natural possibility is the so-called tangent cone $C_p V$.
This cone is the set of
vectors $v$ that can be written as limits of sequences $r_j (q_j - p)$,
where $r_j > 0$ and $q_j \in V$.
We think of $C_p V$ as a subset of the vector space $T_p \C^n$.
We need that the cone is \emph{generic} in $T_p \C^n$ in the sense
that $C_p V$ is not contained in a proper complex
subspace, or in other words that the complex span of $C_p V$ is equal to
$T_p \C^n$.

The genericity of $C_p V$ implies that a complex linear mapping being the identity on the cone
means it is the identity for all vectors.  The exact statement that we want
is the following lemma, whose proof is immediate, as $Df(p)$ is a complex linear
mapping of $T_p \C^n$ to itself.

\begin{lemma} \label{lemma:identity}
Let $V \subset \C^n$ be a set, let
$p \in V$, and suppose that
the complex span of $C_p V$ is $T_p \C^n$.
Suppose that $f$ is a holomorphic mapping to $\C^n$
defined on a neighborhood of $V$ such that the derivative
$Df(p)$ restricted to $C_p V$ is the identity.
Then $Df(p)$ is the identity on $T_p \C^n$.
\end{lemma}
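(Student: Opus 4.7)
The plan is to note that the lemma is essentially a linear algebra statement once the holomorphy of $f$ is used. The key observation is that $Df(p)$, viewed as a map $T_p\C^n \to T_p\C^n$, is $\C$-linear precisely because $f$ is holomorphic on a neighborhood of $V$. So what must be shown is the simple fact that a $\C$-linear endomorphism of $T_p\C^n$ which is the identity on the set $C_pV$ is the identity on the complex span of $C_pV$, which by hypothesis is all of $T_p\C^n$.

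Concretely, I would proceed as follows. First, given any $v \in T_p\C^n$, use the hypothesis that $\mathrm{span}_\C(C_pV) = T_p\C^n$ to write $v = \sum_{j=1}^m \alpha_j v_j$ with $\alpha_j \in \C$ and $v_j \in C_pV$. Then apply $Df(p)$ and use $\C$-linearity together with $Df(p)v_j = v_j$ to conclude $Df(p)v = v$. Since $v$ was arbitrary, $Df(p)$ is the identity on $T_p\C^n$.

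The only minor thing to verify is that $Df(p)$ is indeed the identity on each element of $C_pV$ rather than merely on some subset; but this is exactly the hypothesis. There is no real obstacle here — as the authors remark, the proof is immediate. I would simply write a one-sentence argument invoking $\C$-linearity of $Df(p)$ and the spanning hypothesis.
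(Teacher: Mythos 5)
Your proof is correct and is exactly the argument the paper has in mind: the authors note that the proof is immediate because $Df(p)$ is a $\C$-linear map of $T_p\C^n$ to itself, and your explicit spanning argument just spells out that one line.
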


We can now prove a Cartan uniqueness theorem for real-analytic subvarieties, or actually  (as we said) for
locally connected and locally closed subsets of $\C^n$.

\begin{thm} \label{thm:funcs}
Let $V \subset \C^n$ be a connected, bounded, locally connected, and locally closed
set, and let $p \in V$.
Suppose that $V$ satisfies the contracting disc hull condition at $p$,
and suppose that the complex span of $C_p V$ is $T_p \C^n$.
Let $f \colon V \to V$ be a mapping in $\sO(V)$ such that
$f(p) = p$ and the (real) derivative $Df(p)$ restricted to $C_p V$ is the
identity.
Then $f(z) = z$ for all $z \in V$.
\end{thm}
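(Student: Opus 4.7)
The plan is to adapt the classical Cartan iteration argument to the singular setting by replacing $f$ and all its iterates $f^m$ by their unique holomorphic extensions on a fixed neighborhood of $p$, and then applying Cauchy estimates. By Lemma~\ref{lemma:unique} (applied componentwise), there is a fixed neighborhood $\Omega$ of $p$ such that $f$ extends uniquely to a holomorphic map $F \colon \Omega \to \C^n$ agreeing with $f$ on $V \cap \Omega$; by Lemma~\ref{lemma:identity}, $DF(p) = I$ on $T_p\C^n$. For each $m \geq 1$, the set-theoretic iterate $f^m \colon V \to V$ lies in $\sO(V)$: if $g$ extends $f$ holomorphically to a connected neighborhood $W$ of $V$, then $g^m$ is holomorphic on the open set $\{z \in W : g^j(z) \in W \text{ for } 0 \leq j < m\}$, which contains $V$ since $g(V) = f(V) \subset V \subset W$. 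Lemma~\ref{lemma:unique} then yields unique extensions $F_m \colon \Omega \to \C^n$ of $f^m$.

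The key step is to establish the uniform bound $\abs{F_m(z)} \leq M$ on $\Omega$, where $M = \sup_{z \in V} \abs{z} < \infty$. Fix $z \in \Omega$ and let $K = V \cap \overline{B}$ be the compact connected set from the proof of Lemma~\ref{lemma:unique}. For any sufficiently small $\epsilon > 0$ (so that the $\epsilon$-neighborhood of $K$ lies in the domain of holomorphy of $f^m$), the contracting disc hull condition gives a continuous family $\varphi_t$ of analytic discs with $\varphi_t(\partial \D)$ in the $\epsilon$-neighborhood of $K$ for every $t$, $\varphi_0(\overline{\D})$ in the $\epsilon$-neighborhood of $K$, and $\varphi_1(0) = z$. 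The Kontinuit\"atssatz shows that $F_m$ is holomorphic on a neighborhood of each $\varphi_t(\overline{\D})$, and the maximum principle applied to $F_m \circ \varphi_t$ on $\overline{\D}$ gives $\abs{F_m(z)} \leq \sup \{\abs{F_m(w)} : w \text{ in the $\epsilon$-neighborhood of } K\}$. Since $F_m$ is continuous near $K$ and $\abs{F_m} = \abs{f^m} \leq M$ on $K \subset V$, letting $\epsilon \to 0$ yields $\abs{F_m(z)} \leq M$.

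With the uniform bound in hand, the classical Cartan argument proceeds. Suppose for contradiction that $F$ is not the identity near $p$, and write $F(z) = z + P_k(z-p) + O(\abs{z-p}^{k+1})$ with $P_k \not\equiv 0$ the first nonzero homogeneous term, of degree $k \geq 2$. Since $F(p) = p$, the composition $F^m$ is well-defined and holomorphic on a neighborhood of $p$, and induction gives $F^m(z) = z + m P_k(z-p) + O(\abs{z-p}^{k+1})$. For $z \in V$ sufficiently close to $p$, the values $f^j(z) = F^j(z)$ lie in $V \cap \Omega$ for all $j \leq m$ (by continuity and $F|_{V \cap \Omega} = f$), so $F^m = f^m$ on $V$ near $p$. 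By the uniqueness assertion of Lemma~\ref{lemma:unique} applied locally at $p$, $F_m = F^m$ on a neighborhood of $p$, so $F_m$ has the same Taylor expansion $z + m P_k(z-p) + O(\abs{z-p}^{k+1})$. Cauchy estimates on a fixed ball $\overline{B(p,r)} \subset \Omega$ using $\abs{F_m} \leq M$ bound the $k$-th derivative of $F_m$ at $p$ independently of $m$, forcing $m \abs{P_k}$ to be bounded and hence $P_k \equiv 0$, a contradiction. Thus $F(z) = z$ near $p$, and by the identity principle on connected $\Omega$, $F = \id$ on $\Omega$.

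Finally, taking $g$ to be a holomorphic extension of $f$ to a connected neighborhood $W$ of $V$ (possible since $V$ is connected), $g$ and $F$ agree on the nonempty open set $W \cap \Omega$ by the construction of $F$ in Lemma~\ref{lemma:extend}, so $g = \id$ there; the identity principle on the connected open set $W \subset \C^n$ gives $g = \id$ on $W$, whence $f(z) = z$ for all $z \in V$. The principal obstacle is the uniform sup-bound on the family $\{F_m\}$: classically one exploits $f$ mapping a bounded \emph{open} set to itself, whereas here $f$ only maps the singular set $V$ to $V$, so the extensions $F_m$ could \emph{a priori} be unbounded off $V$; the contracting disc hull condition, combined with the maximum principle on analytic discs, is precisely what transfers the sup bound from $V$ to the full open neighborhood $\Omega$.
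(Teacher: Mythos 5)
Your proposal is correct and follows essentially the same route as the paper: extend $f$ and its iterates uniquely to a fixed neighborhood via Lemma~\ref{lemma:unique}, use the contracting family of analytic discs together with the maximum principle to transfer the sup bound from $V$ to the open neighborhood, identify the power series of the extension of $f^m$ with the formal $m$th iterate of $F$, and run the classical Cartan estimate (your Cauchy estimates on a ball play the role of the paper's integral over a torus) before globalizing by the identity principle and connectedness of $V$. The only differences are in presentation — you spell out that $f^m\in\sO(V)$ and the $\epsilon\to 0$ limit in the sup bound, which the paper leaves implicit.
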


\begin{proof}
By Lemma~\ref{lemma:unique}, there exists a neighborhood $\Omega$ of $p$
to which every function in $\sO(V)$ uniquely extends.
Let $\Delta$ be a polydisc centered at $p$ of radius $r$ such that
$\overline{\Delta} \subset \Omega$.
Let $f^{m} \colon V \to V$ be the $m$th iterate of $f$.  All the
iterates uniquely extend holomorphically to a neighborhood of
$\overline{\Delta}$,
and we denote by $F^m$ the extension of $f^m$.  For simplicity, write $F=F^1$.
Since $Df(p)$ restricted to $C_pV$ is the identity,  Lemma~\ref{lemma:identity} says that
the holomorphic derivative $DF$ at $p$ is the identity.  Similarly, the
derivative $DF^m$ at $p$ is the identity for all $m$.

Since any point of $\Delta$ lies on an analytic disc whose boundary is arbitrarily close
to $K$, the values of the extension of $f^{m}$ on $\Delta$ are bounded by the values
 of $f^{m}$ on $K$.  The functions $F^m$ all have  power series that
converge in $\Delta$.
If we consider a fixed $m$, we can consider $F^m$ to be the $m$th iterate of $F$ on
some small neighborhood of $p$: The $m$th iterate of $F$
restricted to $V$ has a unique holomorphic extension to some small
neighborhood of $\Delta$ where $F^{m}$ also is holomorphic.
Hence the two must have the same power series, so the power series of
$F^m$ behaves in the same way as that of an iterate of $F$.
The proof now follows in the same way as the standard proof
of Cartan's uniqueness theorem (see, e.g., \cite{Lebl:scv}*{\S 1.5}):
Without loss of generality, let $p=0$ and write
\begin{equation}
F(z) = z + \sum_{k=\ell}^\infty F_k(z) = z + F_\ell(z) +
\sum_{k=\ell+1}^\infty F_k(z) ,
\end{equation}
where $F_k$ homogeneous of degree $k$.
We can compute the same expression for $F^m$ by formally iterating the power series.
We find that the series for $F^m$ also starts at the $\ell$th step,
and the series for $F^m$ is
\begin{equation}
F^m(z) = z + m F_\ell(z) +
\sum_{k=\ell+1}^\infty F^m_k(z) .
\end{equation}
The $\ell$th-order terms are obtained from $F^m$
by an integral over some fixed torus $T$ in $\Delta$.  The values of $F^m$
are bounded by a fixed constant: The values of $F^m$
on $T$ are bounded by the values of $f^m$ on $K$, those values are in $V$, and $V$ is bounded.
Thus, letting $m$ go to infinity, we find that it must be that $F_\ell \equiv
0$, so $F(z)=z$ for all $z \in \Delta$. As $V$ is connected and $F$ is the unique holomorphic extension of $f$ to $\Omega$ (which we may take to be connected),
$f$ is also the identity.
\end{proof}

\begin{example}
Consider the submanifold $V$ of $\C^3$ given by $\Im z_3 = \sabs{z_1}^2-\sabs{z_2}^2$
and (to make it bounded) $\snorm{z} < 1$.
The submanifold $V$ satisfies the hypotheses of the theorem because an entire neighborhood of
the origin can always be filled by affine linear discs attached to $V$
(see, e.g., \cite{Lebl:scv}*{\S 3.4}).  By attached discs, we mean discs whose
boundary lies in $V$.  These discs shrink to the origin.
Furthermore, the tangent cone at the origin is just the tangent space there,
and it is a real hyperplane in $T_0 \C^3$. Thus, the cone is generic.  Any real-analytic CR
function (actually, any $C^1$-smooth CR function will do here) automatically
extends to a fixed neighborhood.
So the identity is the only
CR mapping that takes $V$ to itself, fixes the origin, and
has the identity as its derivative at the origin.
\end{example}

\begin{example} \label{example:failure}
The simplest example where the conclusion of the theorem does not hold is perhaps $V = \D
\times (-1,1) \subset \C^2$.
Here a mapping $f$ taking $V$ to itself is allowed to be an
arbitrary real-analytic mapping in the second (real) variable, so  $(z,t) \mapsto
(z,t-t^3)$ is an example.
\end{example}

\begin{example}
Consider $V \subset \C^6$ defined by
\begin{equation}
\Im z_1 = \sabs{z_2}^2-\sabs{z_3}^2 , \qquad
\Im z_4 = \sabs{z_5}^2-\sabs{z_6}^2 ,
\qquad \snorm{z} < 1 .
\end{equation}
Let $z = (z',z'') \in \C^3 \times \C^3$.  Any point $q = (q',q'')$ near the
origin lies on
an analytic disc attached to $V$.  To see this fact, consider $V$ as
$V' \times V''$.  We can find analytic discs $\varphi'$ and $\varphi''$
attached to $V'$ and $V''$ respectively such that
$\varphi'(0)=q'$ and
$\varphi''(0)=q''$.  Then the analytic disc we need is $\varphi' \times
\varphi''$, and such discs again shrink to the origin as required.
Furthermore, the tangent space at the origin is the subspace
$\{ \Im z_1 = \Im z_4 = 0 \}$, which is generic.  So $V$ satisfies the
hypotheses of the theorem, and $V$ is a CR submanifold (generic in fact).
\end{example}

\begin{example}
We now take the CR submanifold $V$ from the preceding example and consider its image under the
finite map $\Phi \colon \C^6 \to \C^6$ defined by
\begin{equation}
\Phi(z) =  \bigl(z_1 + i z_4, z_2, z_3, z_5, z_6, (z_1 + i z_4) z_2 \bigr) .
\end{equation}
Let $\varphi=\Phi|_V$. Then $\varphi(V)$ is a CR singular submanifold.
(Note that, although $\Phi$
is not a biholomorphism of any neighborhood of the origin, $\varphi$
is a diffeomorphism of $V$ onto $\varphi(V)$.)  Because a disc attached
to $V$ is mapped to a disc attached to $\varphi(V)$,
all the discs from the preceding example give
discs attached to $\varphi(V)$.  Also paths of increasing distance
stay paths of increasing distance.
For a compact set $K \subset \varphi(V)$, the hull
$\hat{K}_{CD,0}$ thus contains the pushforward
$\Phi\left(\widehat{\Phi^{-1}(K)}_{CD,0}\right)$.
As $\Phi$ is a finite holomorphic
map, this pushforward contains a whole neighborhood of the origin when
$\widehat{\Phi^{-1}(K)}_{CD,0}$ contains a whole neighborhood of the origin.
So $\varphi(V)$ also satisfies the contracting disc hull condition at the origin.

The tangent cone for $\varphi(V)$ at the origin is not generic because it is a complex hyperplane.
Thus the contracting disc hull condition does not imply the generic tangent
cone condition.
The conclusion of the theorem still holds, however:  Suppose that
$f$ is a map in $\sO(\varphi(V))$ taking $\varphi(V)$ to $\varphi(V)$. Assume that $f$ fixes the origin and its derivative is the identity
on $T_0 \varphi(V)$. We consider $\varphi^{-1} \circ f \circ \varphi$ on $V$.
It satisfies the derivative condition and is real-analytic. Since $\varphi$
extends locally to a biholomorphism on a relatively open dense subset of $V$,
 $\varphi^{-1} \circ f \circ \varphi$ is a CR map on such a subset of $V$, so it is a CR map on $V$.  Thus
$\varphi^{-1} \circ f \circ \varphi$ extends holomorphically to some neighborhood of $V$,
and hence it belongs to $\sO(V)$. We can now apply the uniqueness theorem on $V$. We conclude
that $\varphi^{-1} \circ f \circ \varphi$, and hence $f$, is the identity.
\end{example}

Using the previous example, one could prove a uniqueness result for any CR singular
CR image that is an image of a submanifold satisfying
the hypotheses of the theorem (see \cite{LNR:CRimages}).

\begin{example}
That the contracting disc hull condition does not imply the genericity
of the tangent cone is illustrated by the previous example.
Moreover, the contracting disc hull condition by itself is not enough
to imply the conclusion of the theorem.  Consider the semianalytic set
$V \subset \C^2$ given in coordinates $(z,w)$ by
\begin{equation}
\sabs{w} \leq \sabs{z}^2, \qquad
\sabs{z} \leq 1 .
\end{equation}
The tangent cone at the origin is simply $\{ w = 0 \}$, so the tangent cone is
not generic.  However, $V$ does satisfy the contracting disc hull condition
at the origin,
as clearly a neighborhood of the origin can be covered by affine analytic discs
(fix $w$ to be constant)
with boundary in $V$ that shrink to a disc lying in $\{ w = 0 \}$.

Consider the mapping $(z,w) \mapsto (z,w^2)$. It takes $V$ to $V$, it is
in $\sO(V)$, it fixes the origin, and
its derivative restricted to the tangent cone $C_0 V$ is the identity.
But the mapping is not the identity on $V$.
\end{example}

\section{Generic real-analytic submanifolds}

For a real-analytic CR submanifold $V$, any real-analytic CR function
belongs to $\sO(V)$ since it
extends holomorphically to a neighborhood of $V$.
Our condition on the tangent cone is simply that the CR manifold is in fact
a generic submanifold.
A real submanifold is \emph{generic} if at every point
the complex differentials of its
defining functions are $\C$ linearly independent.
In the real-analytic case, generic simply means
that the submanifold is CR and is not contained in any proper
complex submanifold.

For generic submanifolds,
the contracting disc hull condition follows from the existence of a single
analytic disc satisfying the right condition.
See \cite{BER:book} for the definition of defect.

\begin{lemma} \label{BER:lemma}
Let $V \subset \C^n$ be a generic real-analytic submanifold, and let $p \in V$
be a point such that there exists a small enough $C^{1,\alpha}$,
$0 < \alpha < 1$, analytic disc $\varphi \colon
\overline{\D} \to \C^n$ of defect $0$ attached to $V$ such that $\varphi(1) = p$ and
such that $\frac{d}{d t}|_{t=0} \varphi(e^{it}) \in T_p^c V$.
Then $V$ satisfies the contracting disc hull condition at $p$.
\end{lemma}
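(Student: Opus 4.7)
The plan is to apply Bishop's equation to deform $\varphi$ through a Banach manifold of analytic discs attached to $V$, use the tangency hypothesis $\dot\varphi(1) \in T_p^c V$ to show that interior evaluation of this family covers an open neighborhood of $p$ in $\C^n$, and then produce the continuous families required by the contracting disc hull condition by connecting down to arbitrarily small discs near $p$.

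Because $V$ is a generic real-analytic submanifold, the implicit function theorem applied to Bishop's equation in a tubular neighborhood of $\varphi(\partial\D)\subset V$ produces a smoothly parametrized Banach manifold $\mathcal{M}$ of $C^{1,\alpha}$ analytic discs attached to $V$ containing $\varphi$; assuming $\varphi$ is sufficiently small, $\mathcal{M}$ is path-connected through attached discs down to the constant disc at $p$ (via the Bishop parametrization of attached discs near the constant one). Consider the joint evaluation map $\Psi \colon \mathcal{M}\times \D \to \C^n$, $(\psi,\zeta)\mapsto \psi(\zeta)$. The hypothesis $\dot\varphi(1)\in T_p^c V$ is the vanishing-defect condition for attached analytic discs; by the standard analysis of the linearized Bishop equation (a Hilbert-transform Fredholm problem coupled with the CR structure along $\varphi(\partial\D)$), zero defect forces $d\Psi$ at $(\varphi,\zeta_0)$ to be surjective onto $T_{\varphi(\zeta_0)}\C^n=\C^n$ for interior $\zeta_0\in\D$ close to $1$. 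The submersion theorem together with a uniform openness argument, letting $\zeta_0 \to 1$ so $\varphi(\zeta_0)\to p$, then yields that $\Psi(\mathcal{M}\times \D)$ contains an open neighborhood $U$ of $p$ in $\C^n$.

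To verify the contracting disc hull condition, fix a compact neighborhood $K$ of $p$ in $V$ and $\epsilon > 0$. For any $z\in U$, pick $(\psi,\zeta_z)$ with $\psi(\zeta_z)=z$ and precompose $\psi$ with a Möbius automorphism of $\overline{\D}$ sending $0\mapsto \zeta_z$ (preserving $\partial\D$, hence $V$-attachment of the boundary), so that $\psi(0)=z$. Within $\mathcal{M}$, connect $\psi$ by a continuous path to an arbitrarily small attached disc $\varphi_0\in \mathcal{M}$ near $p$ whose entire image $\varphi_0(\overline{\D})$ lies in the $\epsilon$-neighborhood of $p$; by working in a sufficiently small sub-neighborhood of $\varphi$ in $\mathcal{M}$, all discs along the path have boundaries lying in $V\cap K$. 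This is precisely the family $\varphi_t$ demanded by the CDH condition, so $U\subset \widehat{K}_{CD}$ and $p$ lies in its interior, as required.

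The main obstacle is the surjectivity statement in the middle paragraph: establishing from $\dot\varphi(1)\in T_p^c V$ both that the linearized Bishop equation has full range at $(\varphi,\zeta_0)$, and that the resulting images contain a \emph{full} open neighborhood of $p$ rather than only a one-sided wedge. The former is the classical Baouendi--Rothschild zero-defect analysis; the latter follows because at any interior point $\zeta_0\in\D$ the evaluation is already open onto all of $\C^n$ (no wedge restriction arises at interior evaluation points), while the tangency $\dot\varphi(1)\in T_p^c V$ allows $\varphi(\zeta_0)\to p$ as $\zeta_0 \to 1$ without the openness radius collapsing faster than $|\varphi(\zeta_0)-p|$, so the images accumulate to cover a genuine open neighborhood of $p$.
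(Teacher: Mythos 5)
Your overall architecture matches the paper's: build a Banach manifold of $C^{1,\alpha}$ analytic discs attached to $V$ via Bishop's equation, show that evaluation of nearby attached discs covers a full neighborhood of $p$, and then use connectedness of that manifold (down to the constant disc at $p$) together with a M\"obius reparametrization to produce the contracting families. The paper does exactly this, working with the family $\sA_p(V,\sU)$ of attached discs pinned by $\varphi(1)=p$ (Theorem 6.5.4 and Proposition 6.5.3 of Baouendi--Ebenfelt--Rothschild), and it obtains the covering statement by citing their Theorem 8.2.8 verbatim: under the hypothesis $\frac{d}{dt}|_{t=0}\varphi(e^{it}) \in T_p^c V$, the discs in $\sA_p(V,\sU)$ that are $\delta$-close to $\varphi$ fill a neighborhood of $p$.

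The gap in your write-up is precisely at that covering step, which you flag as ``the main obstacle'' but do not close. Your supporting claim that ``at any interior point $\zeta_0\in\D$ the evaluation is already open onto all of $\C^n$ (no wedge restriction arises at interior evaluation points)'' is false in general: for discs attached to a strictly pseudoconvex hypersurface such as the unit sphere, every interior point of every small attached disc lies on the pseudoconvex side, so the images of the evaluation map accumulate at $p$ only from one side and never contain a neighborhood of $p$. The one-sided/wedge phenomenon is exactly a statement about where the \emph{interiors} of attached discs can go, so it cannot be dismissed at interior evaluation points; the entire content of the tangency hypothesis $\frac{d}{dt}|_{t=0}\varphi(e^{it}) \in T_p^c V$ is to rule out this one-sidedness near $p$. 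Likewise, the assertion that the ``openness radius'' around $\varphi(\zeta_0)$ does not collapse faster than $\sabs{\varphi(\zeta_0)-p}$ as $\zeta_0 \to 1$ is the hard quantitative estimate, not a consequence of anything you have established. (Also, identifying the hypothesis with the ``zero defect'' condition is not accurate; the defect of an attached disc is a different invariant.) To repair the argument you should either carry out the linearized Bishop-equation analysis that underlies BER Theorem 8.2.8, or simply cite that theorem as the paper does; the remainder of your proof (the path in the disc manifold with boundaries staying in $K$, the M\"obius precomposition to arrange $\varphi_1(0)=z$, and the contraction to a small disc) is sound and in fact spells out details the paper leaves implicit.
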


Here, the \emph{complex tangent space} $T_p^c V$ is the subspace of
$T_p V$ that is fixed by the
complex structure $J$, that is, $T_p^c V = T_p V \cap J(T_p V)$.
By ``small enough'' we mean that there exists $\epsilon > 0$ such that
$\varphi$ is within $\epsilon$ of the constant disc $p$,
where the distance used is the $C^{1,\alpha}$ norm.
The particular $\epsilon$
needed depends on $V$ and $p$ and is chosen small enough so a
certain standard construction of analytic discs filling a wedge with edge
$V$ for generic submanifolds applies.  Let us explain the details.

Let $\sD^n$ be the set of analytic discs in $\C^n$ of class $C^{1,\alpha}$.
Let $\langle V \rangle$ be the set of constant discs attached to $V$.
There exists a neighborhood $\sU$ of $\langle V \rangle$
in $\sD^n$ such that the set $\sA(V,\sU)$
of analytic discs in $\sU$ that are attached to $V$ is
a Banach submanifold of $\sD^n$ modeled on $\sD^k \times \R^d$,
where $k$ is the CR dimension of $V$ and $d$ is the real codimension of $V$.
See Theorem 6.5.4 in \cite{BER:book}.
Let $\sA_p(V,\sU)$ be the set of $\varphi \in \sA(V,\sU)$
such that
$\varphi(1)=p$.
It can be proved that $\sA_p(V,\sU)$ is also a Banach
submanifold modeled on $\sD^n_0$, that is, those discs for
which $\varphi(1)=0$; see Proposition 6.5.3 in \cite{BER:book}.
In particular, assuming we take $\sU$ to be connected, the discs in
$\sA_p(V,\sU)$ can be contracted in $\sA_p(V,\sU)$ to the constant disc
$p$.
Picking a path from the constant disc $p$ to any disc so that
the distance to the constant disc in the underlying Banach space is increasing
gives that the uniform norm of the discs in the family is increasing,
and hence the distance of their centers to $p$ is also increasing.
Therefore, if it can be shown that the discs in $\sA_p(V,\sU)$
cover a neighborhood
of $p$ in $\C^n$, then $V$ satisfies the contracting disc hull condition.

Theorem 8.2.8 in \cite{BER:book} immediately implies the above lemma.
It says, among other things,
that there exists an $\epsilon > 0$ such that if there exists a disc
$\varphi \in \sA_p(V,\sU)$ within $\epsilon$ of the constant disc $p$
such that
$\frac{d}{d t}|_{t=0} \varphi(e^{it}) \in T_p^c V$, then
for all $\delta > 0$ the discs in $\sA_p(V,\sU)$ that are
$\delta$-close to $\varphi$ fill a neighborhood of $p \in \C^n$.

We now obtain a corollary of our main theorem for submanifolds satisfying the
hypotheses of the lemma.

\begin{cor}
Let $V\subset \C^n$ be a connected, bounded, generic real-analytic submanifold, let $p \in V$,
and assume that $V$ satisfies the conditions of Lemma~\ref{BER:lemma} at $p$.
If $f \colon V \to V$ is a real-analytic CR mapping such that $f(p) = p$
and $Df(p)$ is the identity, then $f$ is the identity.
\end{cor}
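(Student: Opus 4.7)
The plan is to reduce the corollary to Theorem~\ref{thm:funcs}; the substantive work has already been done, and what remains is to match each hypothesis of that theorem with what is assumed here. Specifically, I need to verify that $V$ is connected, bounded, locally connected, and locally closed; that $V$ satisfies the contracting disc hull condition at $p$; that the complex span of $C_p V$ is $T_p \C^n$; that $f \in \sO(V)$; and that the derivative $Df(p)$ restricted to $C_p V$ is the identity.

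Most of these checks are essentially immediate. Since $V$ is a real-analytic submanifold, it is locally closed and locally connected, while connectedness and boundedness are part of the hypothesis. The contracting disc hull condition at $p$ is exactly the conclusion of Lemma~\ref{BER:lemma}, whose hypotheses are assumed. For a smooth submanifold the tangent cone $C_p V$ coincides with the ordinary tangent space $T_p V$, and by the definition of a generic submanifold the $\C$-span of $T_p V$ equals $T_p \C^n$, so $C_p V$ is generic. Finally, since $C_p V = T_p V$, the assumption that $Df(p)$ is the identity on $T_p V$ gives the restricted condition for free.

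The step that deserves a remark is the verification that $f \in \sO(V)$: this requires the classical fact that every real-analytic CR map from a generic real-analytic submanifold into $\C^n$ extends holomorphically to some (open) neighborhood of the submanifold. One applies the standard holomorphic extension theorem for real-analytic CR functions on generic real-analytic submanifolds componentwise to $f$. I expect this to be the only hypothesis whose justification is not a single line; once it is invoked, $f$ lies in $\sO(V)$, and Theorem~\ref{thm:funcs} applies directly and yields $f(z) = z$ for all $z \in V$.
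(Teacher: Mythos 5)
Your proposal is correct and follows exactly the route the paper takes: invoke Lemma~\ref{BER:lemma} for the contracting disc hull condition, use genericity to see the complex span of $C_pV = T_pV$ is all of $T_p\C^n$, note that real-analytic CR maps on generic real-analytic submanifolds extend holomorphically so that $f \in \sO(V)$ (a fact the paper records at the start of that section), and then apply Theorem~\ref{thm:funcs}. Your write-up is simply a more explicit version of the paper's three-line argument.
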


\begin{proof}
The lemma says that $V$ satisfies the contracting disc hull condition.
Since $V$ is generic, the span of $T_p V$ is $T_p \C^n$.
So the conditions of Theorem~\ref{thm:funcs} are satisfied, and we obtain
the conclusion.
\end{proof}


\section{The theorem for infinitesimal automorphisms}

Let $V \subset \C^n$ be a local real-analytic subvariety, that is,
a closed subset of some open set $U$ that locally near each point
is the common zero set of a family of real-analytic functions.
Denote by $V_{reg}$
the set of regular points, that is, the set of points of $V$ near which $V$
is a real-analytic submanifold.
Let $Z$ be a holomorphic vector field
such that $(\Re Z)_q \in T_q V$ for all $q \in V_{reg}$.  If $V$ is a CR manifold,
then $X = \Re Z$ is called an \emph{infinitesimal CR automorphism}.  This definition
makes sense for any real subvariety.

If we consider the flow of $X$ on the regular part of $V$, we find that it
coincides with the holomorphic flow of $Z$ for real times.  So we can just
consider the holomorphic flow of either $X$ or $Z$ and simply restrict to real time $t$.

If a flow exists for some time in some open set of which $V$ is a closed subvariety,
then the flow of $X$ takes $V$ back to $V$.
That this holds at regular points is standard, but then the flow takes the
regular points to the regular points at least for a short time. By
real-analyticity, all regular points must stay in $V$.
That the flow takes all points back to $V$ then follows by continuity as the
set of regular points of $V$ is dense in $V$.

When we talk about $X$ we mean that $X$ defined in a whole
neighborhood of a point in $\C^n$, and we are thinking of it really as $\Re Z$ for $Z$ defined
on an open set.  Nothing is lost, as the extension to some neighborhood is
unique under the hypotheses of the theorem.

\begin{thm} \label{thm:vf}
Suppose $V \subset \C^n$ is a bounded local real-analytic subvariety
that satisfies the contracting disc hull condition at $p \in V$.
Suppose that
$X$ is an infinitesimal CR automorphism on $V$ whose flow on some
neighborhood of $V$ exists for all
positive time and such that the coefficients of $X$ are all $O(2)$ at $p$.
Then $X \equiv 0$.
\end{thm}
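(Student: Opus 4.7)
The plan is to run the Cartan-iteration argument from the proof of Theorem~\ref{thm:funcs} on the time-$t$ maps of the flow of $X$.  The crucial simplification is that the $O(2)$ hypothesis on $X$ supplies the identity-derivative condition for free at every $t$, so the generic-tangent-cone assumption of Theorem~\ref{thm:funcs} is not needed (Lemma~\ref{lemma:identity} is bypassed).  The main bookkeeping point is reconciling three different holomorphic objects: the given holomorphic flow $\Phi_t$ defined on a neighborhood $W$ of $V$, the unique extension $F_t$ of $\Phi_t|_V$ to a fixed neighborhood $\Omega$ of $p$ produced by Lemma~\ref{lemma:unique}, and the formal composition iterates of $F_1$.

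Write $X=\Re Z$ with $Z$ a holomorphic vector field.  The $O(2)$ condition on $X$ forces the holomorphic coefficients of $Z$ to vanish to order at least $2$ at $p$, so $Z(p)=0$ and $DZ(p)=0$; hence $\Phi_t$ fixes $p$ with $D\Phi_t(p)=\exp(t\,DZ(p))=I$ for every $t\geq 0$.  By hypothesis $\Phi_t$ is defined on $W$ for all $t\geq 0$, and $\Phi_t(V)\subset V$ as remarked just before the theorem, so each coordinate of $\Phi_t|_V$ lies in $\sO(V)$.  Lemma~\ref{lemma:unique} then produces a fixed neighborhood $\Omega$ of $p$ and, for each $t\geq 0$, a unique holomorphic $F_t\colon\Omega\to\C^n$ extending $\Phi_t|_V$.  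The same uniqueness, combined with the semigroup law on $V$, yields $F_s\circ F_t=F_{s+t}$ as formal power series at $p$, so the $m$-fold formal iterate of $F_1$ has the same Taylor series at $p$ as $F_m$; uniqueness also forces $F_t=\Phi_t$ on a neighborhood of $p$ in $\C^n$.

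Pick a polydisc $\Delta$ centered at $p$ with $\overline{\Delta}\subset\Omega$.  By the contracting disc hull condition and the maximum principle on nearly-attached analytic discs (exactly as in the proof of Theorem~\ref{thm:funcs}), $F_m$ is uniformly bounded on $\Delta$ by $\sup_{z\in V}\snorm{z}$, which is finite because $\Phi_m(V)\subset V$ and $V$ is bounded.  Expanding $F_1(z)=z+F_\ell(z-p)+\cdots$ with $\ell\geq 2$ the order of the first nonzero nonlinear term, the $m$-fold formal iterate begins $z+mF_\ell(z-p)+\cdots$, so Cauchy estimates on $\Delta$ with $m\to\infty$ force $F_\ell\equiv 0$, and $F_1\equiv \mathrm{id}$ on $\Omega$.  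The identical argument applied with arbitrary $t>0$ in place of $1$ (the iterates $F_t^m$ match $F_{mt}$ in Taylor series at $p$ and are bounded for the same reason) gives $F_t\equiv\mathrm{id}$ on $\Omega$ for every $t\geq 0$.  Hence $\Phi_t\equiv\mathrm{id}$ on a neighborhood of $p$ for every $t$; differentiating in $t$ at $t=0$ yields $Z\equiv 0$ there, and the identity theorem for holomorphic functions propagates this to the connected component of the domain of $Z$ containing $p$.  Thus $X=\Re Z\equiv 0$.
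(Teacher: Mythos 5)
Your proposal is correct and follows essentially the same route as the paper: restrict the flow to $V$, invoke the unique-extension lemma to get holomorphic time-$t$ maps near $p$ whose formal iterates match the time-$mt$ maps, and run the standard Cartan iteration using boundedness of $V$ and the disc-hull estimate, with the $O(2)$ hypothesis replacing the tangent-cone/identity-derivative step. Your explicit final passage---proving $F_t\equiv\mathrm{id}$ for every $t$ and differentiating in $t$ to get $Z\equiv 0$---is a welcome elaboration of a step the paper leaves implicit.
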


We do not require any condition on the tangent cone
since we are assuming that
the coefficients of $X$,
as functions defined in a whole neighborhood of the origin,
are $O(2)$.

\begin{proof}
The proof follows in the same way as that of Theorem~\ref{thm:funcs}.
We consider the time-one flow $f$ of $Z$. We note that, when restricted
to $V$, the iterates of $f$ are simply the flow of $Z$ restricted to $V$
for positive integer times, and we know that all of these exist.
As germs at $p$, and hence as power series at $p$, the time-$k$ flow is
equal to $k$th iterate of $f$.  Let us call this function $f^k$.
Each one of these functions extends uniquely holomorphically to a function
$F^k$ in some polydisc centered at $p$,
and (as before) when restricted to $V$ the iterates $F^k$ are just $f^k$.
We have unique extension to the polydisc, and we find that the power series
of $F^k$ is simply the composition of the power series of $F$.

Now as the coefficients of $X$ at the origin are $O(2)$, the derivative of
the flow at the origin is the identity.

The rest of the proof follows in exactly the same way as that of
Theorem~\ref{thm:funcs}.
\end{proof}

\begin{example} \label{example:cone}
Asking for $O(1)$ is not sufficient.
Consider the subvariety $V \subset \C^2$ given by
\begin{equation}
\sabs{z}^2-\sabs{w}^2 = 0 , \qquad \sabs{z}^2+\sabs{w}^2 < 1.
\end{equation}
Intersections with vertical and horizontal complex lines give rise to
analytic discs attached to $V$ that fill an entire neighborhood of the origin, and
 clearly they shrink to the origin.  So $V$ satisfies the
contracting disc hull condition at the origin.
The vector fields
\begin{equation}
X =
\Re
\left(
  (i a     z  + b   w )  \frac{\partial}{\partial z}
+ (\bar{b} z  + i c w )  \frac{\partial}{\partial w}
\right)
\end{equation}
for $a,c \in \R$ and $b \in \C$ are infinitesimal CR automorphisms of
$V$ at the origin.  In
fact, those are all the linear vector fields.  For the right choice of
$a,b,c$, the flow exists for all time.

Note that Corollary~\ref{cor:CRautcirc} below implies that these vector
fields are the only CR automorphisms of $V$ that fix the origin and exist for all
time.
\end{example}

%


\section{Automorphisms of circular subvarieties}

A set $V \subset \C^n$ is \emph{circular} if, whenever $p \in V$, we have
$e^{i\theta} p \in V$ for all $\theta \in \R$.  We obtain as an immediate
corollary of the preceding results an analogue for real-analytic subvarieties of the classical result on classification of biholomorphisms between circular domains.

\begin{cor}
Suppose $V_1, V_2 \subset \C^n$ are bounded circular real-analytic
subvarieties such that $V_1$ satisfies the contracting disc hull condition
at $0 \in V_1$ and such that the complex span of $C_0 V_1$ is $T_0 \C^n$.
Suppose $f \colon V_1 \to V_2$ is a bijective map in  $\sO(V_1)$ whose
inverse is in $\sO(V_2)$, and suppose $f(0) = 0$.  Then $f$ is linear.
\end{cor}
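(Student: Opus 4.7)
The plan is to adapt Cartan's classical proof for biholomorphisms of bounded circular domains to this setting. Write $r_\theta(z) = e^{i\theta} z$ for complex scalar rotation; by circularity, $r_\theta$ restricts to a bijection of $V_j$ with itself for $j=1,2$, and since $r_\theta$ is globally holomorphic and linear, it lies in $\sO(V_j)$ in a trivial way.

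The first step is to introduce the auxiliary self-map
\begin{equation}
g_\theta = f^{-1} \circ r_{-\theta} \circ f \circ r_\theta \colon V_1 \to V_1.
\end{equation}
Because $f \in \sO(V_1)$ and $f^{-1} \in \sO(V_2)$, after shrinking neighborhoods so that the holomorphic extensions of $f$ and $f^{-1}$ can be composed with the globally holomorphic $r_{\pm \theta}$, one sees $g_\theta \in \sO(V_1)$, and plainly $g_\theta(0) = 0$. Let $F$ denote the unique holomorphic extension of $f$ near $0$ given by Lemma~\ref{lemma:unique}, and let $G$ be any holomorphic extension of $f^{-1}$ near $0$. Since $G \circ F$ and the identity agree on $V_1$ near $0$, the uniqueness half of Lemma~\ref{lemma:unique} forces $G \circ F = \id$ as germs, so in particular $DF(0)$ is invertible. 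The chain rule then gives
\begin{equation}
D g_\theta(0) = DF(0)^{-1} \cdot (e^{-i\theta} I) \cdot DF(0) \cdot (e^{i\theta} I) = I,
\end{equation}
so $Dg_\theta(0)$ is the identity on $T_0 \C^n$ and a fortiori on $C_0 V_1$.

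The second step is to invoke Theorem~\ref{thm:funcs} (applied, if necessary, to the connected component of $0$ in $V_1$, which is again circular and inherits all the needed hypotheses, since real-analytic subvarieties are locally connected and locally closed) to conclude $g_\theta \equiv \id$ on $V_1$ for every $\theta \in \R$. Rearranging gives the equivariance
\begin{equation}
f(e^{i\theta} z) = e^{i\theta} f(z) \qquad \text{for every } z \in V_1 \text{ and every } \theta \in \R.
\end{equation}

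The final step is the homogeneity argument. The map $z \mapsto F(e^{i\theta} z) - e^{i\theta} F(z)$ is holomorphic on a neighborhood of $0$ and vanishes on $V_1$ near $0$, so by uniqueness of extension (Lemma~\ref{lemma:unique}) it vanishes identically on a neighborhood of $0$. Expanding $F(z) = \sum_{k \geq 1} F_k(z)$ in homogeneous polynomial components, the identity $F(e^{i\theta} z) = e^{i\theta} F(z)$ for all $\theta$ forces $F_k \equiv 0$ for every $k \neq 1$. Thus $F$ agrees near $0$ with the complex linear map $F_1$, and the identity principle propagates this equality over any connected open neighborhood of $V_1$ on which $f$ extends holomorphically, yielding $f = F_1|_{V_1}$. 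The only real bookkeeping obstacle is verifying that $g_\theta$ genuinely lies in $\sO(V_1)$ despite its intermediate stages visiting $V_2$; this is handled by shrinking neighborhoods appropriately. Everything else is mechanical once Theorem~\ref{thm:funcs} is in hand.
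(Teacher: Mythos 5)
Your proposal is correct and follows essentially the same route as the paper's own proof: form $g_\theta = f^{-1}\circ r_{-\theta}\circ f\circ r_\theta$, check $Dg_\theta(0)=I$, apply Theorem~\ref{thm:funcs} to get the equivariance $f(e^{i\theta}z)=e^{i\theta}f(z)$, and then kill all non-linear homogeneous terms of the holomorphic extension. The extra care you take with the invertibility of $DF(0)$ and with $g_\theta\in\sO(V_1)$ is welcome but does not change the argument.
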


\begin{proof} Fix $\theta\in\R$.
The map $g(z) = f^{-1}\bigl(e^{-i\theta}f(e^{i\theta} z)\bigr)$ takes $V_1$ to
$V_1$.  By the chain rule, $Dg(0) = I$.  By Theorem~\ref{thm:funcs},
$f^{-1}\bigl(e^{-i\theta}f(e^{i\theta} z)\bigr) = z$ for $z \in V_1$, or in other words,
$f(e^{i\theta} z) = e^{i\theta}f(z)$.
Without loss of generality, suppose that $f$ is the holomorphic extension of $f$ near the
origin.
Because, near the origin, any function in $\sO(V_1)$ extends uniquely as a holomorphic function to a
neighborhood, we find that
$f(e^{i\theta} z) = e^{i\theta}f(z)$ for all $z$ in a neighborhood of $0$ in
$\C^n$.
Write the power series of the extension of $f$ as
$f(z) = \sum_{k=1}^\infty f_k(z)$, where the $f_k$ are
homogeneous polynomials of degree $k$.
Then we have
\begin{equation}
\sum_{k=1}^\infty e^{i\theta} f_k(z)
=
e^{i\theta} \sum_{k=1}^\infty f_k(z)
=
\sum_{k=1}^\infty f_k(e^{i\theta} z)
=
\sum_{k=1}^\infty e^{ik\theta}f_k(z) .
\end{equation}
These equations hold for all $\theta$, so uniqueness of the power series dictates that
$f_k \equiv 0$ if $k \not= 1$, and we are done.
\end{proof}

Similarly, we can classify infinitesimal automorphisms.

\begin{cor} \label{cor:CRautcirc}
Suppose $V \subset \C^n$ is a bounded circular local real-analytic subvariety
that satisfies the contracting disc hull condition at $0 \in V$.
Suppose that
$X$ is an infinitesimal CR automorphism on $V$ whose flow on some
neighborhood of $V$ exists for all
positive time and such that $X$ vanishes at $0$.
Then $X$ is linear, and the eigenvalues of the matrix representing $X$
have nonpositive real parts.
\end{cor}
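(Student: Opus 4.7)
The plan is to reduce linearity to the Cartan iteration used in the proof of Theorem~\ref{thm:funcs} by constructing a suitable ``twisted'' time-one map, while the eigenvalue condition will fall out of uniform boundedness of the flow. Let $Z$ be the holomorphic vector field with $X = \Re Z$ and positive-time flow $\phi_t$ on a neighborhood of $V$, let $A = DZ(0)$, and write $Z = Z_1 + Z_2 + \cdots$ with $Z_k$ homogeneous of degree $k$ (so $Z_1(z) = Az$).

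First I would set up extensions and uniform bounds as in the proof of Theorem~\ref{thm:vf}: by Lemma~\ref{lemma:unique} each $\phi_t|_V$ extends uniquely to a holomorphic $F_t$ on a fixed neighborhood $\Omega$ of $0$, and the analytic-disc argument from the proof of Theorem~\ref{thm:funcs} produces a polydisc $\Delta \subset \Omega$ on which $|F_t| \leq M_V := \sup_V |z|$ uniformly in $t \geq 0$. Cauchy estimates on $\Delta$ then give $\|e^{tA}\| = \|DF_t(0)\| \leq C$ uniformly in $t \geq 0$, which forces every eigenvalue of $A$ to have nonpositive real part; this already handles the eigenvalue portion of the conclusion.

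Next, the circular structure lets me produce a global linear flow that preserves $V$. Since each rotation $R_\theta$ is a biholomorphism of $V$, the pushforward $(R_\theta)_* Z$ is an infinitesimal CR automorphism of $V$ for every $\theta$, and the rotational average $\frac{1}{2\pi}\int_0^{2\pi}(R_\theta)_* Z\,d\theta = Z_1$ is therefore also an infinitesimal CR automorphism (tangency to $V_{\mathrm{reg}}$ is a real-linear condition preserved by integration). Because $Z_1$ is linear, its flow $e^{tA}$ is defined for all $t \in \R$, and the density-of-regular-points argument recalled just before Theorem~\ref{thm:vf} yields $e^{tA}V = V$ for every $t \in \R$.

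For linearity I would define $\hat F_t(z) := e^{-tA} F_t(z)$ on $\Omega$. Then $\hat F_t(V) \subset V$, $\hat F_t(0) = 0$, and $D\hat F_t(0) = e^{-tA}e^{tA} = I$. Iterating $\hat F_1$ gives maps $\hat F_1^m \in \sO(V)$ preserving $V$, each extending uniquely to $\Omega$ by Lemma~\ref{lemma:unique}; crucially, since $\hat F_1^m(V) \subset V$, the same analytic-disc argument bounds $|\hat F_1^m| \leq M_V$ on $\Delta$ uniformly in $m$. The Cartan iteration in the proof of Theorem~\ref{thm:funcs} then applies verbatim (we do not need Lemma~\ref{lemma:identity}, because $D\hat F_1(0)$ is already the identity on all of $T_0\C^n$): writing $\hat F_1(z) = z + H_\ell(z) + O(z^{\ell+1})$ with $\ell \geq 2$ the smallest nonlinear degree, iteration gives $\hat F_1^m(z) = z + mH_\ell(z) + \text{higher}$, uniform boundedness forces $H_\ell \equiv 0$, and repeating kills all higher terms, so $\hat F_1 = \mathrm{id}$ and $F_1 = e^A$. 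Running the same argument with $\hat F_t$ for each $t > 0$ gives $F_t = e^{tA}$, and differentiating at $t = 0$ yields $Z(z) = Az$. The step I expect to require the most care is verifying the uniform bound on $\hat F_1^m$ on $\Delta$: one must read it off from $\hat F_1^m(V) \subset V$ via the disc-hull argument and not from any a priori estimate on $\|e^{-mA}\|$, which can grow when an eigenvalue of $A$ has strictly negative real part.
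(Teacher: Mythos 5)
Your handling of the eigenvalue statement is correct and in fact more explicit than the paper's (you extract $\|e^{tA}\|=\|DF_t(0)\|\le C$ for $t\ge 0$ from the disc bound plus Cauchy estimates, where the paper appeals to stability of the critical point), and the observation that the rotational average of $(R_\theta)_*Z$ is exactly the linear part $Z_1$, so that $\Re Z_1$ is again tangent to $V_{\mathrm{reg}}$, is a correct and pleasant computation. Your route to linearity is also genuinely different from the paper's: the paper applies the conjugation trick of the circular-mapping corollary to each time-$t$ map, showing $(F^t)^{-1}\circ R_{-\theta}\circ F^t\circ R_\theta$ has derivative $I$ at $0$, is therefore the identity by the Cartan iteration, hence $F^t$ commutes with rotations and is linear by the Fourier/power-series argument, after which linearity of $Z$ is read off from the flow equation.

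However, there is a genuine gap at the step ``$e^{tA}V=V$ for every $t\in\R$.'' Tangency of $\Re Z_1$ to $V_{\mathrm{reg}}$ forces a trajectory of the linear flow to stay in $V$ only for as long as it stays inside the open set $U$ in which $V$ is a closed subvariety; nothing prevents the trajectory from exiting through $\overline{V}\cap\partial U$, after which it is unconstrained. This is not a technicality: in Example~\ref{example:cone} the linear fields with $4\sabs{b}^2>(a-c)^2$ are tangent to the cone $\sabs{z}=\sabs{w}$ yet their flows push points of the bounded piece $V$ out of the unit ball. In your setting the bound $\|e^{tA}\|\le C$ controls only positive time, whereas the map you actually need is $e^{-tA}$ for $t>0$: the eigenvalues of $-A$ have nonnegative real parts, so $\|e^{-tA}\|$ can grow without bound (exponentially if some eigenvalue of $A$ has strictly negative real part, which the conclusion of the corollary permits). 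Hence $\hat F_t(V)\subset V$ is not established, and with it the uniform bound $\sabs{\hat F_1^m}\le M_V$ on $\Delta$ that drives the Cartan iteration collapses --- exactly the step you flagged as the delicate one, but the proposed justification (tangency of the averaged field plus the density-of-regular-points argument) does not close it. To be fair, the paper's own proof faces the analogous difficulty of inverting $F^t$ as a self-map of $V$; but as written your argument does not get around it either.
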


\begin{proof}
We follow the same procedure applied to the flow of $Z$ (where $X = \Re Z$).
The procedure implies that for all positive times $t$, the flow
$F^t(z)$ is linear in $z$.  Then consider the equation of the flow,
$\frac{d}{dt} F^t(z) = Z\bigl(F^t(z)\bigr)$.  The fact that
$F^t$ is linear and $F^t$ is invertible implies that $Z$ is also linear,
thus given by a matrix.  Then so is $X$.  As the equation of flow of $X$ has
a stable critical point, it is classical that the eigenvalues of the matrix that gives $X$
must have nonpositive real parts.
\end{proof}

For an example of using this corollary, note that it applies to
the subvariety $V$ in Example~\ref{example:cone}.


\section{On the erratum in the published version}
\label{section:exampleerratum}

In the original (published) version of this paper,
the condition about the distance on the path
in the definition of $\widehat{K}_{CD,p}$
was not present in the definition and hence was also missing from the
definition of the contracting disc hull condition.
That is, the definition was given as
\emph{$\widehat{K}_{CD}$ as the set of points $z \in \C^n$
such that for every $\epsilon > 0$ there exists a continuous
family of analytic discs $\varphi_t \colon \overline{\D} \to \C^n$ ($t \in [0,1]$)
such that the boundary  $\varphi_t(\partial \D)$ lies within an
$\epsilon$-neighborhood of $K$ for all $t$, $\varphi_1(0) = z$, and
the entire disc $\varphi_0(\overline{\D})$ lies within an
$\epsilon$-neighborhood of $K$.}
This definition is not sufficient to guarantee extension of holomorphic
functions to a fixed neighborhood.  Fortunately, adding the condition on the
distance fixes this issue and all the applications examples above do in fact
satisfy the new stricter definition.  Let us give an example of why
the original version of Lemma 2.1 (which used $\widehat{K}_{CD}$
rather than $\widehat{K}_{CD,p}$) as
given above fails.

\begin{example}
Let $K \subset \C^2$ be the set
$K = K_1 \cup K_2 \cup K_3$ where
\begin{align*}
K_1 &= \left\{ (z,w) : \sabs{z-1}^2+\sabs{w}^2 = 1/4 , \Im z \leq 1/4 \right\} , \\
K_2 &= \left\{ (z,0) : \sabs{z-1} \geq 1/4 , \sabs{z} = 1 \right\} , \\
K_3 &= \left\{ (z,0) : \sabs{z-1} \leq 1/4 , \sabs{z} = 1, \Im z \geq 0 \right\} .
\end{align*}
See the diagram in Figure~\ref{figure:diagram}.  Let $p=(1,0)$.
Since the $K_1$ part of the set $K$ is a sphere
with a cap cut off (a ``fishbowl'') we can fill a neighborhood of $p$ with discs attached to $K$
by simply using complex lines.
By moving these lines, we can continuously shrink these discs to the point where the semicircular
path leading to $p$ leaves the fishbowl.
We obtain a family of discs contracting to $p$ that cover a neighborhood
of $p$ with their interiors.
However, the paths created by these families will never be
of increasing distance as required by the new definition. 
Write $g(z,w)$ for a branch of $\log z$ in $\C^2$ that is holomorphic in a neighborhood of $K$ and
that equals $0$
at $p$.  The function
\[
f(z,w) = \frac{1}{i \epsilon + g(z,w)}
\]
is holomorphic in a neighborhood of $K$ yet has a singularity arbitrarily close to $p$
for small $\epsilon > 0$.  Clearly functions in $\sO(K)$ do not extend to a fixed
neighborhood of $p$.
\begin{figure}
\includegraphics{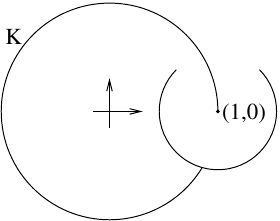}
\caption{Diagram of the set $K$ as seen in the $z$-plane with the point
$(1,0)$ marked.  Note that the smaller circle represents an entire sphere
minus a cap (a ``fishbowl''), while the larger arc is simply an arc in the $z$-plane.%
\label{figure:diagram}}
\end{figure}

We can thus see that the issue with the original definition of the hull is that if a path
created by a family of discs is allowed to leave and come back towards $p$,
 the extension may then be on a different leaf of the envelope of holomorphy.
\end{example}

We need to iterate the construction above
to find a set $V$ which satisfies the uncorrected contracting disc hull condition at $p$
(that is using $\widehat{K}_{CD}$ rather than the new
$\widehat{K}_{CD,p}$)
and fails to admit holomorphic extension to a fixed neighborhood of $p$.
Therefore, Lemma 2.2 and thus also Theorem 2.4 using the uncorrected
definition fail.

\begin{example}
Let $K$ be as before.
Let $h(z,w)$ be a branch of the function $\sqrt{z}$ that is holomorphic
in a neighborhood of $K$ such that $h(p)=1$.  The map
$\phi(z,w) = (h(z,w),w)$ is biholomorphic in a neighborhood of $K$,
and it 
``pulls the $p$ out of the fishbowl''
(it moves the fishbowl and keeps $p$ at $p$) when we look at $\phi(K)$.
Let $\tilde{K}_0 = K$.  Suppose $\tilde{K}_\ell$ is defined,
let $A(\tilde{K}_\ell) =
\delta (\tilde{K}_\ell -p) + p$
be a scaling and translation of $\tilde{K}_\ell$ that preserves $p$.
Here $\delta > 0$ is small enough to
shrink $\tilde{K}_\ell$ so that
$A(\tilde{K}_{\ell})$ fits into the set where the inverse of $\phi$
is still biholomorphic and so that the set
$\phi^{-1}(A(\tilde{K}_{\ell}))$ intersects $K$ only at $p$
and lies inside the set covered by the discs attached to $K$ (attached to
the fishbowl) as constructed before.
Then define
\[
\tilde{K}_{\ell+1} =
K \cup \phi^{-1}(A(\tilde{K}_{\ell})) .
\]
Clearly, $\tilde{K}_{\ell+1} \supset \tilde{K}_{\ell}$, and the set
$V = \bigcup_{\ell=0}^\infty \tilde{K}_\ell$ is compact, locally closed,
and locally connected.
The set $\phi(V)$ contains $A(\tilde{K}_0)$,
which contains the scaled and
moved copy $K$, that is, $A(K)$.
Inverting $A$, we find that $A^{-1}(\phi(V))$ contains a copy of $K$
itself, and moreover $\phi$ is biholomorphic on a neighborhood of
$A^{-1}(\phi(V))$.  We can now repeat the construction of $f$, picking
$\epsilon$ not too small so that the singularity misses
$V$ (enough to ensure that it misses $\phi^{-1}(A(K))$) but so that
the point $(-i\epsilon,0)$ still lies within the set
covered by the attached discs to $K$ as before.
Reversing
the biholomorphisms we applied, we find a function that does not extend to
the neighborhood given by analytic discs covering the
(image of the) fishbowl
in $\phi^{-1}(A(K))$ (we can push forward those disc families).

Any neighborhood of $p$ in $V$ contains some image of $K$ after finitely
many of these given affine moves and mappings by $\phi$.
So, iterating the procedure, we find a function that is holomorphic on a
neighborhood of $V$ but does not extend holomorphically to a fixed
neighborhood of $p$ (as we can iterate sufficiently many times to put the
image of the singularity arbitrarily close to $p$).
This all despite any neighborhood of $p$ in $V$ having
a contracting disc hull in the uncorrected sense that contains a
neighborhood of $p$ in $\C^2$ (the uncorrected sense is preserved
by biholomorphisms, so it is sufficient to construct the discs on $K$).
In particular, the uncorrected
Lemma 2.2, and hence also the uncorrected Theorem 2.4, do not hold.
\end{example}


%


\def\MR#1{\relax\ifhmode\unskip\spacefactor3000 \space\fi%
  \href{http://mathscinet.ams.org/mathscinet-getitem?mr=#1}{MR#1}}

\begin{bibdiv}
\begin{biblist}

\bib{BER:98}{article}{
   author={Baouendi, M. S.},
   author={Ebenfelt, P.},
   author={Rothschild, Linda Preiss},
   title={CR automorphisms of real analytic manifolds in complex space},
   journal={Comm. Anal. Geom.},
   volume={6},
   date={1998},
   number={2},
   pages={291--315},
   issn={1019-8385},
   review={\MR{1651418}},
}

\bib{BER:book}{book}{
  author={Baouendi, M. Salah},
  author={Ebenfelt, Peter},
  author={Rothschild, Linda Preiss},
  title={Real submanifolds in complex space and their mappings},
  series={Princeton Mathematical Series},
  volume={47},
  publisher={Princeton University Press, Princeton, NJ},
  date={1999},
  pages={xii+404},
  isbn={0-691-00498-6},
  review={\MR{1668103}},
}

\bib{BER:00}{article}{
   author={Baouendi, M. S.},
   author={Ebenfelt, P.},
   author={Preiss Rothschild, Linda},
   title={Local geometric properties of real submanifolds in complex space},
   journal={Bull. Amer. Math. Soc. (N.S.)},
   volume={37},
   date={2000},
   number={3},
   pages={309--336},
   issn={0273-0979},
   review={\MR{1754643}},
}

\bib{Beloslapka:88}{article}{
   author={Beloshapka, V. K.},
   title={Finite-dimensionality of the group of automorphisms of a real
   analytic surface},
   language={Russian},
   journal={Izv. Akad. Nauk SSSR Ser. Mat.},
   volume={52},
   date={1988},
   number={2},
   pages={437--442, 448},
   issn={0373-2436},
   translation={
      journal={Math. USSR-Izv.},
      volume={32},
      date={1989},
      number={2},
      pages={443--448},
      issn={0025-5726},
   },
   review={\MR{941685}},
}

\bib{ChernMoser:74}{article}{
   author={Chern, S. S.},
   author={Moser, J. K.},
   title={Real hypersurfaces in complex manifolds},
   journal={Acta Math.},
   volume={133},
   date={1974},
   pages={219--271},
   issn={0001-5962},
   review={\MR{425155}},
}

\bib{FLF:19}{article}{
   author={Bertrand, Florian},
   author={Blanc-Centi, L\'{e}a},
   author={Meylan, Francine},
   title={Stationary discs and finite jet determination for non-degenerate
   generic real submanifolds},
   journal={Adv. Math.},
   volume={343},
   date={2019},
   pages={910--934},
   issn={0001-8708},
   review={\MR{3891986}},
}

\bib{ELZ:03}{article}{
   author={Ebenfelt, P.},
   author={Lamel, B.},
   author={Zaitsev, D.},
   title={Finite jet determination of local analytic CR automorphisms and
   their parametrization by 2-jets in the finite type case},
   journal={Geom. Funct. Anal.},
   volume={13},
   date={2003},
   number={3},
   pages={546--573},
   issn={1016-443X},
   review={\MR{1995799}},
}

\bib{Joricke:09}{article}{
   author={J\"{o}ricke, Burglind},
   title={Envelopes of holomorphy and holomorphic discs},
   journal={Invent.\ Math.},
   volume={178},
   date={2009},
   number={1},
   pages={73--118},
   issn={0020-9910},
   review={\MR{2534093}},
}

\bib{KaupUpmeier}{article}{
   author={Kaup, Wilhelm},
   author={Upmeier, Harald},
   title={An infinitesimal version of Cartan's uniqueness theorem},
   journal={Manuscripta Math.},
   volume={22},
   date={1977},
   number={4},
   pages={381--401},
   issn={0025-2611},
   review={\MR{466651}},
}

\bib{LamelMir:07}{article}{
   author={Lamel, Bernhard},
   author={Mir, Nordine},
   title={Finite jet determination of CR mappings},
   journal={Adv. Math.},
   volume={216},
   date={2007},
   number={1},
   pages={153--177},
   issn={0001-8708},
   review={\MR{2353253}},
}

\bib{Lebl:scv}{misc}{
   author={Lebl, Ji\v{r}\'i},
   title={Tasty Bits of Several Complex Variables},
   edition={4th edition},
   note={\url{https://www.jirka.org/scv/}}
}


\bib{LNR:CRimages}{article}{
   author={Lebl, Ji\v{r}\'{\i}},
   author={Noell, Alan},
   author={Ravisankar, Sivaguru},
   title={On CR singular CR images},
   journal={Internat. J. Math.},
   volume={32},
   date={2021},
   number={13},
   pages={Paper No. 2150090, 26 pp.},
   issn={0129-167X},
   review={\MR{4361992}},
}

\bib{Lee:06}{article}{
   author={Lee, Kang-Hyurk},
   title={Almost complex manifolds and Cartan's uniqueness theorem},
   journal={Trans. Amer. Math. Soc.},
   volume={358},
   date={2006},
   number={5},
   pages={2057--2069},
   issn={0002-9947},
   review={\MR{2197447}},
}

\bib{Poletsky}{article}{
   author={Poletsky, Evgeny A.},
   title={Holomorphic currents},
   journal={Indiana Univ. Math. J.},
   volume={42},
   date={1993},
   number={1},
   pages={85--144},
   issn={0022-2518},
   review={\MR{1218708}},
}

\bib{Porten}{article}{
   author={Porten, Egmont},
   title={Polynomial hulls and analytic discs},
   journal={Proc.\ Amer.\ Math.\ Soc.},
   volume={145},
   date={2017},
   number={10},
   pages={4443--4448},
   issn={0002-9939},
   review={\MR{3690627}},
}


\bib{Tumanov:20}{article}{
   author={Tumanov, Alexander},
   title={Stationary discs and finite jet determination for CR mappings in
   higher codimension},
   journal={Adv. Math.},
   volume={371},
   date={2020},
   pages={107254, 11},
   issn={0001-8708},
   review={\MR{4116100}},
}


\bib{Zaitsev:02}{article}{
   author={Zaitsev, Dmitri},
   title={Unique determination of local CR-maps by their jets: a survey},
   note={Harmonic analysis on complex homogeneous domains and Lie groups
   (Rome, 2001)},
   journal={Atti Accad. Naz. Lincei Cl. Sci. Fis. Mat. Natur. Rend. Lincei
   (9) Mat. Appl.},
   volume={13},
   date={2002},
   number={3-4},
   pages={295--305},
   issn={1120-6330},
   review={\MR{1984108}},
}

\end{biblist}
\end{bibdiv}

\end{document}